\documentclass[12pt,a4paper]{amsart}

\usepackage[utf8]{inputenc}
\usepackage[margin=1in]{geometry}
\usepackage{thmtools} 
\usepackage{hyperref}
\usepackage{etoolbox} 
\usepackage[capitalize,noabbrev]{cleveref}
\usepackage{amsmath}
\usepackage{amsfonts}
\usepackage{amsthm}
\usepackage{amssymb}
\usepackage{xcolor}
\usepackage{graphicx}
\usepackage{tikz} 
\usetikzlibrary{positioning}
\usepackage{enumerate} 
\usepackage{subcaption} 
\usepackage{mathrsfs} 
\usepackage{enumitem}
\usepackage{enumerate}
\usepackage{comment}
\usepackage{multicol}
\usepackage{url}
\usepackage{stackengine}
\usepackage{nicematrix}
\usepackage{fancyhdr}
\usepackage{placeins}

\theoremstyle{plain}
\newtheorem{theorem}{Theorem}[section]
\def\bthm{\begin{theorem}}
\def\ethm{\end{theorem}}

\newtheorem{theoremx}{Theorem}

\newtheorem{prop}[theorem]{Proposition}
\def\bprop{\begin{prop}}
\def\eprop{\end{prop}}

\newtheorem{ques}[theorem]{Question}
\def\bques{\begin{ques}}
\def\eques{\end{ques}}

\newtheorem{cjec}[theorem]{Conjecture}
\def\bcjec{\begin{cjec}}
\def\ecjec{\end{cjec}}

\newtheorem{cor}[theorem]{Corollary}
\def\bcor{\begin{cor}}
\def\ecor{\end{cor}}

\newtheorem{fact}[theorem]{Fact}
\def\bfact{\begin{fact}}
\def\efact{\end{fact}}

\newtheorem{lem}[theorem]{Lemma}
\newtheorem{lemma}[theorem]{Lemma}
\def\blem{\begin{lem}}
\def\elem{\end{lem}}

\theoremstyle{definition}
\newtheorem{defn}[theorem]{Definition}
\def\bdefn{\begin{defn}}
\def\edefn{\end{defn}}

\newtheorem{nota}[theorem]{Notation}
\def\bnota{\begin{nota}}
\def\enota{\end{nota}}

\newtheorem*{conc}{Conclusion}
\def\bconc{\begin{conc}}
\def\econc{\end{conc}}

\newtheorem{alg}[theorem]{Algorithm}
\def\balg{\begin{alg}}
\def\ealg{\end{alg}}

\def\bproof{\begin{proof}}
\def\eproof{\end{proof}}

\theoremstyle{remark}

\newtheorem{rem}[theorem]{Remark}
\def\brem{\begin{rem}}
\def\erem{\end{rem}}

\newtheorem{ex}[theorem]{Example}
\def\bex{\begin{ex}}
\def\eex{\end{ex}}

\newtheorem{exs}[theorem]{Examples}
\def\bexs{\begin{exs}}
\def\eexs{\end{exs}}

\newtheorem{obs}{Observation}
\def\bobs{\begin{obs}}
\def\eobs{\end{obs}}	

\makeatletter
\let\c@figure\c@theorem

\makeatother

\let\ra=\rightarrow

\let\iso=\cong

\let\tensor=\otimes

\let\epsilon=\varepsilon

\def\Z{\mathbb{Z}}

\def\a{\mathbf{a}}
\def\b{\mathbf{b}}
\def\c{\mathbf{c}}
\def\e{\mathbf{e}}
\def\o{\mathbf{1}}
\def\u{\mathbf{u}}
\def\eps{\boldsymbol{\epsilon}}
\def\brho{\boldsymbol{\rho}}
\def\k{\mathbb{K}}

\DeclareMathOperator{\cc}{\mathbf{c}}
\DeclareMathOperator{\Ac}{A_\mathbf{c}}
\DeclareMathOperator{\Span}{Span}

\newcommand{\norm}[1]{|| #1 ||}

\newcommand{\card}[1]{\left \lvert #1 \right\rvert}

\newcommand{\ul}[1]{\underline{#1}}
\newcommand{\mscr}[1]{\mathscr{#1}}

\renewcommand{\subset}{\subseteq}

\newcommand{\NL}[2]{{\text{NL}}{\left(#1,#2\right)}}

\begin{document}

\title[WLP of Monomial Complete Intersections]{The Non-Lefschetz Locus of Monomial Complete Intersections in Positive Characteristic}
\author[A. LaClair]{Adam LaClair}
\address{University of Nebraska--Lincoln, Department of Mathematics, Lincoln, NE, USA}
\email{alaclair2@unl.edu}

\subjclass[2020]{Primary 13C40, 13A02; Secondary 13A35, 13F55}

\begin{abstract}
In the 1980s, R. Stanley and J.\ Watanabe proved that over a field of characteristic zero every monomial complete intersection has the strong Lefschetz property. Determining which monomial complete intersections in positive characteristic have the weak Lefschetz property has been the subject of extensive research. In this paper, we give a complete description of the locus of exponent vectors corresponding to monomial complete intersections over a field of positive characteristic that fail the weak Lefschetz property. We describe this locus explicitly in terms of translates of positive-orthant lattice balls in the taxicab metric.
\end{abstract}

\keywords{weak Lefschetz property, graded Han--Monsky ring, monomial complete intersections, positive characteristic}

\maketitle 
\pagestyle{fancy}
\fancyhf{}

\fancyhead[LE]{\normalfont\scriptsize\thepage}
\fancyhead[CE]{\normalfont\scriptsize A. LACLAIR}

\fancyhead[CO]{\normalfont\scriptsize WLP OF MONOMIAL COMPLETE INTERSECTIONS}
\fancyhead[RO]{\normalfont\scriptsize\thepage}

\renewcommand{\headrulewidth}{0pt}

\section{Introduction}

Let $\k$ denote a field of positive characteristic $p > 0$. For $n \geq 3$ and $\cc \in \Z_{\geq 2}^n$, consider the Artinian monomial complete intersection
\begin{align*}
    A_{\cc} := \frac{\k[x_1,\ldots,x_n]}{(x_1^{c_1},\ldots,x_n^{c_n})}.
\end{align*}
$A_{\cc}$ is said to have the \emph{weak Lefschetz property} (WLP) if there exists a linear form $\ell \in A_{\cc}$ so that 
\begin{align*}
    \times \ell : [A_{\cc}]_i \ra [A_{\cc}]_{i+1}
\end{align*}
has maximal rank (i.e., injective or surjective) for every $i \geq 0$. $A_{\cc}$ is said to have the \emph{strong Lefschetz property} (SLP) if multiplication by $\ell^d$ has maximal rank for all positive integers $d$ and for all $i \geq 0$. Stanley \cite{Stanley1980HardLefschetz} and, independently, Watanabe \cite{watanabe1987dilworth} proved that $A_{\cc}$ has the strong Lefschetz property in characteristic $0$. These results have motivated extensive research into understanding which $\k$-algebras possess the weak or strong Lefschetz properties and their connections to other aspects of commutative algebra, as well as the study of these properties in positive characteristic \cite{harima2003weak,MiglioreMiroRoig2003,MaenoWatanabe2009,MiglioreNagel2013}. 

In positive characteristic, determining either Lefschetz property is considerably more subtle. In 2019, Lundqvist and
Nicklasson~\cite{LundqvistNicklasson2019}, building on work of
Cook II~\cite{Cook2012}, gave a complete numerical characterization
of when $\Ac$ has the SLP for $n \geq 3$. In three variables, Li and Zanello~\cite{LiZanello2010} characterized the WLP for
monomial complete intersections. Subsequent work on the WLP has established classifications for special families of exponent sequences \cite{BrennerKaid2011, KustinVraciu2014, LundqvistNicklasson2019, Vraciu2015}. In 2024, Kyomuhangi, Marangone, Raicu, and Reed~\cite{kyomuhangi2024cohomology} translated the WLP into a question about weights in cohomology representations associated with the incidence correspondence and characterized failure of the WLP in characteristic $2$. In~\cite{KMRR2026Cohomology} and~\cite{KMRR2026Lefschetz}, the authors extended their techniques to give a complete numerical characterization of when $\Ac$ has the WLP in positive characteristic. 

In this paper, we give an explicit geometric description of the set of exponent vectors for which $A_{\mathbf c}$ fails the WLP. Our formulation expresses this set as a union of prescribed translates of positive-orthant taxicab lattice balls, with explicit radii and translation vectors. We obtained this characterization independently of \cite{KMRR2026Cohomology, KMRR2026Lefschetz}, and it is equivalent to their numerical characterization. Our approach uses combinatorial techniques to construct kernel elements witnessing failure of the WLP. For the reverse inclusion, we use the structural result {\cite[Proposition 5.4]{kyomuhangi2024cohomology}, now appearing as \cite[Proposition 3.3]{KMRR2026Cohomology}}, to prove that every failing exponent vector belongs to one of the prescribed translates.

Let $\NL{n}{p}$ denote the set of exponent vectors $\cc \in \Z_{\geq 2}^n$ for which $A_{\cc}$ fails the weak Lefschetz property. Write $\norm{\cdot}_1$ for the taxicab norm, and define 
\begin{equation}
    \label{eqn:gamma_n}
    \Gamma_n := \left\{ \b \in \Z_{\geq0}^n : \norm{\b}_1 \text{ is even and } 2 \max_i b_i \leq \norm{\b}_1 \right\}.
\end{equation}
For $\mathbf{a} \in \Z^{n}$ and $r \in \Z_{\geq 0}$, let
\begin{align*}
    B_r^+(\mathbf{a}) := \{\mathbf{u} \in (\Z_{\geq 1})^{n} : \norm{\mathbf{u} - \mathbf{a}}_1 \leq r \}
\end{align*}
be the \emph{positive-orthant lattice ball} of radius $r$ centered at $\a$. For $q = p^e$ where $e\geq 1$, put 
\begin{equation}
    r_q :=(n-2)(q-1)-1
\end{equation}
and write $\o = (1,\ldots,1).$

Our main result is then the following geometric characterization of the non-weak Lefschetz locus of exponent vectors of monomial complete intersections in positive characteristic.
\begin{theoremx}
\label{thm:intro}
For $n \geq 3$ and a prime $p > 0$, 
\begin{align*}
    \NL{n}{p} = \Z_{\geq 2}^n \cap \bigcup_{e\geq 1}\bigcup_{\b\in \Gamma_n} \left( B_{r_{p^e}}^+(p^e\o) + p^e\b \right).
\end{align*}
\end{theoremx}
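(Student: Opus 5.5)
We prove the two inclusions separately. Throughout we use that $A_{\cc}$ is Gorenstein with symmetric Hilbert function of socle degree $D=\norm{\cc}_1-n$, and that $\ell=x_1+\cdots+x_n$ can be taken as the test form, since monomial algebras have the WLP if and only if this $\ell$ is a Lefschetz element. By symmetry and unimodality, the WLP fails precisely when $\times\ell:[A_{\cc}]_{\lfloor (D-1)/2\rfloor}\to[A_{\cc}]_{\lfloor (D+1)/2\rfloor}$ is not of maximal rank. When $D$ is odd this middle map is between spaces of equal dimension, so failure means a nonzero kernel element exists in degree $(D-1)/2$.

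\textbf{Containment ``$\supseteq$''.} Fix $q=p^e$, $\b\in\Gamma_n$, and $\cc=q\o+q\b+\u$ with $\norm{\u}_1\le r_q$ and $\cc\in\Z_{\ge2}^n$. We build an explicit kernel element of $\times\ell$ in a degree at most the middle degree. The basic building block is the Frobenius identity $\ell^{q}=\sum x_i^{q}$. In $\k[x_i]/(x_i^{c_i})$ with $c_i\ge q$, the element $x_i^{c_i-q}$ is killed by $x_i^q$. We would form a product over blocks, for instance
\[
F=\prod_{i} x_i^{c_i-q-qb_i}\cdot G(x_1^q,\dots,x_n^q)\cdot \ell^{q-1},
\]
where $G$ is a polynomial in the $y_i=x_i^q$, chosen so that $G\cdot\sum_i y_i=0$ in $\k[y]/(y_i^{b_i+1})$. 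Such a $G$ exists by the balancing condition $2\max b_i\le\norm{\b}_1$, $\norm{\b}_1$ even. This is the characteristic-free fact that $\k[y]/(y_i^{b_i+1})$ fails injectivity at its middle degree, since the sum $\sum y_i$ is a non-unit there and dimension counting applies. Concretely, take $G$ to be a socle-type element of $\ker(\times\sum y_i)$ in degree $\norm{\b}_1/2$, which exists because that algebra's Hilbert function peaks there; if needed we perturb $\b$ by parity. Then $\ell F=\prod x_i^{\dots}\,G\,\ell^q=0$. We verify that $F\neq0$ and compute its degree. The radius $r_q=(n-2)(q-1)-1$ is exactly the slack ensuring that $\deg F$ stays below the middle degree $\lfloor (D-1)/2\rfloor$, which forces failure of injectivity before the peak. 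The $q\o$ term plays the role of the $x_i^{q}$ factors, and $\u$ absorbs excess exponents. This degree bookkeeping, together with the $\ell^{q-1}$ twist, is routine once $F$ is fixed. In the boundary cases of odd $D$ we use surjectivity failure via Macaulay duality.

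\textbf{Containment ``$\subseteq$''.} Suppose $\cc\in\NL{n}{p}$. We invoke \cite[Proposition 5.4]{kyomuhangi2024cohomology}, which gives a structural criterion for failure of WLP: failure forces the existence of a $q=p^e$ such that the reductions of the $c_i$ modulo $q$, or the quotients $\lfloor c_i/q\rfloor$, satisfy a triangle-type inequality together with a bound on the sum of residues. We write $c_i=q(1+b_i)+u_i$ with $u_i$ in a suitable symmetric range $(-q,q)$. The criterion's parity and triangle conditions on the quotients translate exactly into $\b\in\Gamma_n$, and the residue bound translates into $\norm{\u}_1\le r_q$.

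\textbf{Main obstacle.} The main difficulty is this final translation: choosing the representative $(\b,\u)$ so that both the parity of $\norm{\b}_1$ and the balancing inequality hold simultaneously. Residues may need to be shifted by $\pm q$ in some coordinates, which trades $\norm{\u}_1$ against $\b$. We would handle this by a case analysis on the number of coordinates with residue above $q/2$, showing that an adjustment always exists within the ball of radius $r_q$.
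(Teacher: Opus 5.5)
\textbf{The reverse inclusion has a genuine gap.} You attribute to \cite[Proposition 5.4]{kyomuhangi2024cohomology} a content it does not have. That result is not a numerical failure criterion in terms of residues and a triangle inequality. It says only that a summand $\delta_r(-j)$ of $\delta_{c_1}\cdots\delta_{c_n}$ with $p\nmid r$ is centered, $r+2j=\sigma+1$. The criterion you describe is essentially the theorem itself, so your argument assumes what is to be proved, and the real work is not the residue bookkeeping you call the main obstacle. The missing steps are as follows.

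(i) A summand witnessing failure, i.e.\ with $2(r+j-1)\le\sigma-1$, must have $p\mid r$. Otherwise centering gives $\sigma+1=r+2j\le\sigma$.

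(ii) Write $r=q\lambda$ with $p\nmid\lambda$; this is what selects $q$. Restrict to $Z=T^q$ and split $\Ac=\bigoplus_{\brho}M_{\brho}$, where each $M_{\brho}$ is isomorphic to a monomial complete intersection with exponents $u_i+1=\lfloor (c_i-1-\rho_i)/q\rfloor+1$. Krull--Schmidt then gives a summand $\delta_\lambda(-j_0)$ there.

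(iii) Centering applies to this prime-to-$p$ summand, giving $\lambda+2j_0=\sum_i u_i+1$. Combined with $j_0\le\sum_i u_i-u_k$ (the generator is nonzero modulo $Y$), this shows that $\sum_i u_i+\lambda-1$ is even and $2u_k\le\sum_i u_i+\lambda-1$.

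(iv) Move $\u$ to some $\b\in\Gamma_n$ with $\norm{\u-\b}_1\le\lambda-1$. The resulting extra $q(\lambda-1)$ in $\norm{\eps}_1$ is exactly absorbed by the identity $E=(n-2)(q-1)-\Delta-q(\lambda-1)$, $\Delta\ge1$, of Lemma~\ref{lem:descr_E}.

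The offsets $\rho_i$ come from which $M_{\brho}$ carries the descended summand, not from $c_i \bmod q$. A case analysis on residues above $q/2$ would therefore not reproduce this.

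\textbf{The forward inclusion has the right idea but needs repairs.} The idea is the paper's: Frobenius of a middle-degree kernel element for $\k[y]/(y_i^{b_i+1})$, times a monomial, times a power of $\ell$.

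First, the exponents $c_i-q-qb_i=\epsilon_i$ can be negative (down to $1-q$), so you must use $\epsilon_i^+=\max(0,\epsilon_i)$.

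Second, and more importantly, your claim that $F=\ell^{q-1}\mu$ is nonzero, with $\mu=G(x_1^q,\ldots,x_n^q)\,x^{\eps^+}$, is false in general. Take $n=7$, $p=2$, $q=4$, $\b=0$ and $\c=(2,\ldots,2)$. Then $\norm{\eps}_1=14=r_4$, so $\c\in\mscr{B}_4$. Yet $\ell^2=\sum_i x_i^2=0$ in $\Ac$, hence $F=\ell^3=0$. The fix is to prove only $\mu\neq0$ and $\ell^q\mu=0$, then take the least $s\le q-1$ with $\ell^{s+1}\mu=0$. Then $\ell^s\mu$ is a nonzero kernel element of degree at most $\deg(\ell^{q-1}\mu)\le\lfloor(\sigma-1)/2\rfloor$.

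Third, the existence of $G$ needs the strict inequality $h_d>h_{d+1}=h_{d-1}$ for the Hilbert function of $\k[y]/(y_i^{b_i+1})$ with socle degree $2d$. This is a strict-unimodality statement (e.g.\ Dhand's criterion) that uses $2\max_i b_i\le\norm{\b}_1$. ``The Hilbert function peaks there'' does not supply it: for $\b=(2,0,0)$ the function is flat in the middle and there is no kernel.

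Finally, no separate treatment of odd socle degree via duality is needed. A kernel element in degree $\le\lfloor(\sigma-1)/2\rfloor$ already witnesses failure.
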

Theorem \ref{thm:intro} shows that the non-Lefschetz locus can be expressed as a union of translates of positive-orthant lattice balls. Figure~\ref{fig:NL33} illustrates this phenomenon for $n=3$ and $p=3$.

The two inclusions have complementary proofs. For the forward inclusion, we apply Frobenius to a suitable relation in an auxiliary monomial complete intersection. The taxicab bound is the degree estimate ensuring that the resulting construction witnesses failure of the WLP. For the reverse inclusion, we start with a graded $K[T]$-summand witnessing failure of the WLP and restrict to the action of $T^q$ for a suitable prime power $q$. This produces a summand of length coprime to $p$ in an auxiliary monomial complete intersection. Lemma \ref{lem:prime_to_p_centering} controls its grading, and an elementary adjustment of the resulting exponent vector places the original vector in one of the prescribed translates.

In the next section, we introduce the relevant background and prove Theorem \ref{thm:intro}.

\section{Results}

We begin with a few preliminary results.

\begin{lemma}
\label{rem:translated_equiv}
We have that $\b \in \Gamma_n$ if and only if there exists a non-negative integer $d$ and pairs of integers $1 \leq u_j < v_j \leq n$ for $1 \leq j \leq d$ so that
\begin{align*}
    \b = \sum_{j=1}^d (\e_{u_j} + \e_{v_j})
\end{align*}
where $\e_{k}$ denotes the $k$-th standard basis vector of $\Z_{\ge0}^n$.
\end{lemma}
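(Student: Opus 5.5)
The statement is the classical characterization of degree sequences of loopless multigraphs on $n$ vertices: $\b$ is a sum of vectors $\e_u+\e_v$ with $u<v$ exactly when $\b$ is the degree sequence of a multigraph on $[n]$ with $d$ edges and no loops. The plan is to prove both directions directly. The forward direction is immediate, and the reverse direction goes by induction on $\norm{\b}_1$, using a greedy choice of the edge to remove.

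For the ``if'' direction, suppose $\b=\sum_{j=1}^d(\e_{u_j}+\e_{v_j})$. Then $\norm{\b}_1=2d$ is even. For each $i$, the coordinate $b_i$ counts the $j$ with $i\in\{u_j,v_j\}$. Since $u_j\neq v_j$, each pair contributes at most $1$ to $b_i$, so $b_i\le d=\norm{\b}_1/2$. Hence $2\max_i b_i\le\norm{\b}_1$ and $\b\in\Gamma_n$.

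For the ``only if'' direction, we induct on $\norm{\b}_1$, which is even. If $\norm{\b}_1=0$, take $d=0$. Otherwise let $\b\in\Gamma_n$ with $\norm{\b}_1=2m>0$.
\begin{enumerate}[label=(\roman*)]
\item Choose an index $u$ with $b_u$ maximal. Choose a second index $v\neq u$ with $b_v$ maximal among the remaining coordinates. Then $b_v\ge1$: otherwise $\norm{\b}_1=b_u\le m$, contradicting $\norm{\b}_1=2m>0$.
\item Set $\b'=\b-\e_u-\e_v$. Then $\b'\in\Z_{\ge0}^n$ and $\norm{\b'}_1=2(m-1)$ is even.
\item It remains to check $\max_i b'_i\le m-1$. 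For $i\in\{u,v\}$ this holds because $b_i-1\le m-1$. For $i\notin\{u,v\}$ we need $b_i\le m-1$. If instead $b_i=m$ (it cannot exceed $m$), then $b_u,b_v\ge b_i=m$ by the choice of $u$ and $v$. That gives $\norm{\b}_1\ge 3m>2m$, a contradiction.
\end{enumerate}
Applying the induction hypothesis to $\b'$ and adding back the pair $\{u,v\}$ (relabelled so that the smaller index comes first) gives the required decomposition of $\b$.

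The only delicate point is step (iii). Removing an edge at the two largest coordinates is exactly what preserves the inequality $2\max_i b_i\le\norm{\b}_1$. A careless choice of pair, for example one that avoids a coordinate equal to $m$, would break the induction. Everything else is routine bookkeeping.
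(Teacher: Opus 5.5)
Your proof is correct and takes the same route as the paper: induction on $\norm{\b}_1$, subtracting $\e_u+\e_v$ at the two largest coordinates and checking that the result stays in $\Gamma_n$. You also write out the easy converse and the key inequality check in step (iii), both of which the paper's one-line sketch leaves to the reader.
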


\begin{proof}
The proof of the forward direction is by induction on $\norm{\b}_1$. For the inductive step, we subtract $1$ from the largest positive coordinates and show that this vector belongs to $\Gamma_n$.
\end{proof}

For convenience, we define the set
\begin{align*}
    \mathscr{B}_q := \Z_{\geq 2}^n \cap \bigcup_{\b\in \Gamma_n} \left( B_{r_q}^+(q\o) + q\b \right).
\end{align*}
By $B_{r_q}^+(q\o) + q\b$, we mean the pointwise translation of each vector of $B_{r_q}^+(q\o)$ by $q\b$. 

\begin{lemma}
\label{lem:equiv_char}
We have that 
\end{lemma}
\begin{equation}
\label{eq:descr_Bq}
 \c \in \mscr{B}_q \iff 
\begin{cases}
 \c\in \Z_{\geq 2}^n, \\
 \c = q(\o + \b) + \eps \text{ for some } \b \in \Gamma_n \text{ and } \eps \in \Z^n, \\
 \norm{\eps}_1 \leq r_q, \\
 \epsilon_i \geq 1-q \text{ for } 1 \leq i \leq n.
\end{cases}
\end{equation}

\begin{rem}
Let $\c \in \Z^n_{\geq 2}$. If $\c \in \mathscr{B}_q$, then
\begin{align*}
    \sigma := \sum_{i=1}^n(c_i -1) \geq 2q-1.
\end{align*}
Equivalently, if $\c \in \mscr{B}_q$, then $q \leq \frac{\sigma+1}{2}$. For a fixed vector $\c$, determining whether $\c \in \NL{n}{p}$ becomes a finite check. This same bound appears in {\cite[equation (1.6)]{KMRR2026Lefschetz}}.
\end{rem}

\begin{ex}
Let $n = 3$, $p = q = 3$, and $\b = (4,4,4) \in \Gamma_n$. Then, $r_3 = 1$, and we obtain that the following translate of the taxicab lattice ball centered at 
\begin{align*}
    q\o + q\b = (15,15,15)
\end{align*}
is contained in the non-Lefschetz locus of exponent vectors,
\begin{align*}
    \{ \c \in \Z_{\geq2}^3 : \card{c_1-15}+\card{c_2-15}+\card{c_3-15}  \leq 1\} \subset \NL{3}{3}.
\end{align*}
In particular, $(14,15,15)$ lies on the boundary of the above ball, and we have that
\begin{align*}
    \mathbb{F}_3[x,y,z]/(x^{14},y^{15},z^{15})
\end{align*}
fails the WLP. Figure~\ref{fig:NL33} depicts the subset of $\NL{3}{3}$ with $2 \leq c_i \leq 16$.
\end{ex}

\begin{figure}[!htbp]
\centering
\includegraphics[width=0.8\textwidth]{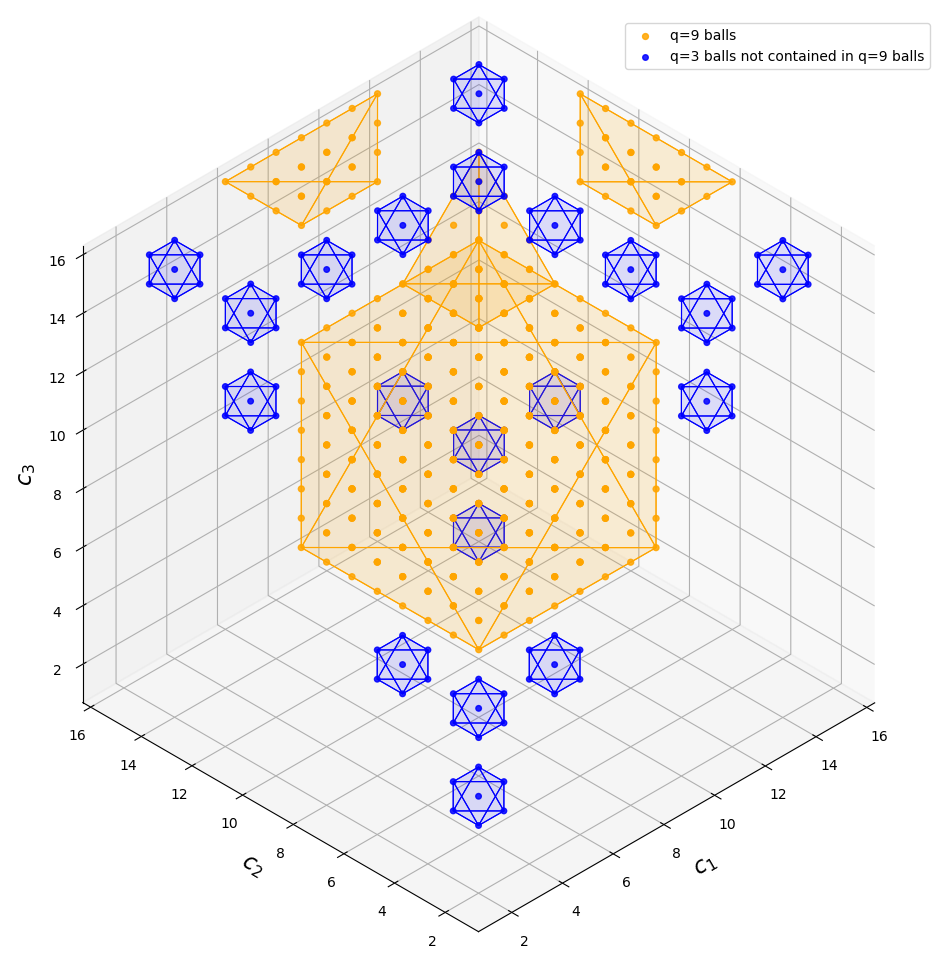} 
\caption{Convex hulls of the $q=3$ (blue) and $q=9$ (orange) translated taxicab balls intersecting the region $2\leq c_i\leq 16$. The $q=3$ balls contained entirely in a $q=9$ ball are omitted.
}
\label{fig:NL33}
\end{figure}

\subsection{Forward Inclusion}

We begin by recording the following result on the Hilbert function of a monomial complete intersection whose vector of exponents minus one belongs to $\Gamma_n$.

For a non-negative integer $a$, we define $[a] := \{0 < 1 < \cdots < a\}$ to be the chain of length $a$. We recall the following combinatorial lemma. For $m \geq 0$, let 
\begin{align*}
    P_m(t) := \sum_{i=0}^m t^i \in \Z[t].
\end{align*}

\FloatBarrier

\blem
\label{lem:HF_ineq}
Let $b_1,\ldots,b_n$ be non-negative integers satisfying:
\begin{enumerate}
    \item $\sum_{i=1}^n b_i = 2d$ for some non-negative integer $d$, and
    \item $b_i \leq d$ for all $1 \leq i \leq n$.
\end{enumerate}
If 
\begin{align*}
    \prod_{i=1}^n P_{b_i}(t) = \sum_{i=0}^{2d} h_i t^i
\end{align*}
and $h_{-1} := 0$, then $h_d > h_{d-1}$.
\elem 

\bproof
When $d = 0$, $h_0 = 1 > 0 = h_{-1}$. Hence we reduce to the case that $d \geq 1$. Without loss of generality, we may suppose that $b_1 \geq \cdots \geq b_n \geq 0$. Let $P$ be the product of chains poset $[b_1] \times \cdots \times [b_n]$. The number of elements of $P$ of rank $i$ is given by $h_i$. We have
\begin{align*}
    b_1 \leq d < d+1 \leq \left( \sum_{i=2}^n b_i\right)+1.
\end{align*}
Hence, Dhand's strict unimodality criterion for products of chains {\cite[Lemma 2.1(3)]{Dhand2014}} implies that $h_d > h_{d-1}$.
\eproof

We are now ready to prove that $\mscr{B}_q \subset \NL{n}{p}$.

\begin{proof}[Proof of the forward inclusion of Theorem~\ref{thm:intro}]
Let $\c \in \mscr{B}_q$. Then, $\c = q(\o+\b)+\eps$ as in \eqref{eq:descr_Bq}. Let $S = \k[x_1,\ldots,x_n]$ and $J := (x_1^{b_1+1},\ldots,x_n^{b_n+1})$. The Hilbert series of $S/J$ is given by $\prod_{i=1}^{n} P_{b_i}(t)$, and the socle degree of $S/J$ is $\sum_{i=1}^{n} b_i = 2d$ for some non-negative integer $d$. By Lemma \ref{lem:HF_ineq} together with symmetry of the Hilbert function, we have that 
\begin{align*}
    \dim_{\k} [S/J]_d > \dim_{\k} [S/J]_{d+1}.
\end{align*}
Thus, there exists $f \in [S]_d$ satisfying $f \notin J$ and $Lf \in J$ where $L = x_1+\cdots+x_n$.

For $1 \leq i \leq n$, put $\epsilon^+_i := \max\{0,\epsilon_i\}$ and $\epsilon_i^- := \min\{0,\epsilon_i\}$. Then, $\epsilon_i = \epsilon_i^+ + \epsilon_i^-$ and $\norm{\eps}_1 = \sum_{i=1}^n (\epsilon_i^+ - \epsilon_i^-)$. Let $I := (x_1^{c_1},\ldots,x_n^{c_n})$, and put $\mu := f^q x_1^{\epsilon_1^+}\cdots x_n^{\epsilon_n^+}$. We will show that
\begin{enumerate}
    \item $\mu \notin I$,
    \item $L^q\mu \in I$,
    \item $\deg(L^{q-1}\mu) \leq \lfloor \frac{\sigma -1}{2} \rfloor$ where $\sigma$ denotes the socle degree of $S/I$.
\end{enumerate}
Items (1) and (2) imply that there exists $0\leq s \leq q-1$ such that $L^s\mu \notin I$ and $L^{s+1}\mu \in I$. Hence $L^s\mu$ is in the kernel of the multiplication map of $L$ on $S/I$. Item (3) implies that this kernel element has degree less than half the socle degree. For an Artinian quotient by a monomial ideal, verifying the WLP is equivalent to checking that multiplication by $L$ has maximal rank \cite{harima2003weak}. Thus, $S/I$ fails the WLP.

\textbf{Claim 1}. We show that $\mu \notin I$.

\textit{Proof of Claim 1.} Let $x^{\u}$ be a monomial appearing in the support of $f$ which does not belong to $J$. Then, $0 \leq u_i \leq b_i$ for $1 \leq i \leq n$. The $x_i$-degree of $x^{q\u} x^{\eps^{+}}$ satisfies
\begin{align*}
    qu_i + \epsilon_i^+ 
    &\leq qb_i+\epsilon_i^+ \\
    &= c_i - q - \epsilon_i^- \\
    &\leq c_i-1.
\end{align*}
Thus, $\mu \notin I$.

\textbf{Claim 2}. We show that $L^q\mu \in I$.

\textit{Proof of Claim 2.} We observe that
\begin{align*}
    L^q\mu = (Lf)^q x^{\eps^+}.
\end{align*}
Since $Lf \in J$, we have that 
\begin{align*}
    (Lf)^q x^{\eps^+} \in J^{[q]}\cdot (x^{\eps^+}) \subset I.
\end{align*}

\textbf{Claim 3}. We show that $\deg(L^{q-1}\mu) \leq \lfloor \frac{\sigma -1}{2} \rfloor$ where $\sigma$ denotes the socle degree of $S/I$.

\textit{Proof of Claim 3.} We compute
\begin{equation}
\label{eq:deg}
    \deg(L^{q-1}\mu) = qd + \sum_{i=1}^{n} \epsilon_i^+ + q -1.
\end{equation}
We compute 
\begin{equation}
\label{eq:socle_deg}
    \sigma = (q-1)n + 2qd + \sum_{i=1}^n \epsilon_i^+ + \sum_{i=1}^n \epsilon_i^-.
\end{equation}
Using equations \eqref{eq:deg} and \eqref{eq:socle_deg}, we compute
\begin{equation}
\label{eqn:eq}
\begin{aligned}
    \sigma -1 -2\deg(L^{q-1}\mu)
    &= (n-2)(q-1)-1 - \left( \sum_{i=1}^n \epsilon_i^+ - \sum_{i=1}^n \epsilon_i^- \right) \\
    &= r_q - \norm{\eps}_1 \\
    &\geq 0.
\end{aligned}
\end{equation}
The identity in equation \eqref{eqn:eq} explains the taxicab radius in Theorem \ref{thm:intro}.
\end{proof}

\subsection{Reverse Inclusion}

Consider the category of finite-length graded $\k[T]$-modules where $T$ acts as a positively graded linear map. In this category, the action of $T$ on the tensor product $M \tensor_\k N$ is required to satisfy:
\begin{align*}
    T(m\tensor n) := (Tm) \tensor n + m\tensor (Tn).
\end{align*}
The graded Han--Monsky representation ring \cite{HanMonsky1993} is the split Grothendieck ring of this category. As an abelian group the graded Han--Monsky ring is generated by isomorphism classes of objects in this category, where $+$ satisfies $[M\oplus N] = [M]+[N]$. Multiplication is given by tensor product. For $r \geq 1$ and $j \in \Z$, we define the element $\delta_r(-j)$ to be the isomorphism class of $\k[T]/(T^r)$ whose generator is shifted to degree $j$. Every indecomposable object in the graded Han--Monsky representation ring is isomorphic to $\delta_r(-j)$ for some $r\geq 1$ and $j \in\Z$. The set $\{\delta_r(-j) : r\geq 1, j \in \Z\}$ forms a $\Z$-basis for the graded Han--Monsky representation ring. The isomorphism class of $\Ac$ as a $\k[T]$-module where $T = x_1+\cdots+x_n$ is represented by $\delta_{c_1}\cdots \delta_{c_n}$ where $\delta_r := \delta_r(0)$.

With this setup, $A_\c$ fails the weak Lefschetz property if and only if $\delta_r(-j)$ is a summand of $\delta_{c_1}\cdots\delta_{c_n}$ satisfying
\begin{equation}
    \label{eq:failure_of_wlp}
    2(r+j-1) \leq \sigma-1
\end{equation}
where $\sigma = \sum_{i=1}^n (c_i -1)$ is the socle degree of $A_\c$.

We recall the following result of \cite{kyomuhangi2024cohomology}.
\begin{lemma}[{\cite[Proposition 5.4]{kyomuhangi2024cohomology}; \cite[Proposition 3.3]{KMRR2026Cohomology}}]
\label{lem:prime_to_p_centering}
Let $c_1,\ldots,c_n$ be positive integers. If $p\nmid r$ and $\delta_r(-j)$ is a summand of $\delta_{c_1}\cdots\delta_{c_n}$, then
\begin{align*}
    r+2j = \sigma+1
\end{align*}
where $\sigma = \sum_{i=1}^{n} (c_i - 1)$ is the socle degree of $\Ac$.
\end{lemma}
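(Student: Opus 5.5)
The plan is to induct on $n$, splitting the decomposition of $\delta_{c_1}\cdots\delta_{c_n}$ into two parts. Call a graded summand $\delta_r(-j)$ \emph{centered in} a module $M$ with top degree $\sigma$ if $r+2j=\sigma+1$, i.e.\ its degrees $j,\dots,j+r-1$ are placed symmetrically about $\sigma/2$. Write
\begin{align*}
\delta_{c_1}\cdots\delta_{c_{n-1}} = C + D,
\end{align*}
where $C$ is the sum of the summands of length prime to $p$ and $D$ is the sum of the summands of length divisible by $p$. The inductive hypothesis is that every summand of $C$ is centered with respect to $\sigma' = \sum_{i<n}(c_i-1)$. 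For $n=1$ this is trivial: $\delta_c$ is centered.

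\textbf{Step 1 (the $p$-divisible part is an ideal).} I would first show that the $\Z$-span of the classes $\delta_s(-j)$ with $p\mid s$ is an ideal of the graded Han--Monsky ring. Since the grading plays no role in block lengths, this reduces to the ungraded statement. That statement is a known consequence of Han--Monsky's multiplication formulas: $\delta_{pm}\delta_k$ is a sum of $\delta_{ps}$'s. I would cite it, or reprove it by observing that $T^{p}$ also acts as a derivation on tensor products, so it suffices to treat blocks of length exactly divisible by $p$ one $p$-adic digit at a time. Consequently $D\cdot\delta_{c_n}$ contributes no summands of length prime to $p$. Every summand of $\delta_{c_1}\cdots\delta_{c_n}$ of length prime to $p$ therefore comes from $C\cdot\delta_{c_n}$, that is, from some product $\delta_{r'}(-j')\,\delta_{c_n}$ with $p\nmid r'$ and $r'+2j'=\sigma'+1$.

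\textbf{Step 2 (reduction to two factors).} Write $\delta_{r'}(-j')\delta_{c_n} = \bigl(\delta_{r'}\delta_{c_n}\bigr)(-j')$. Suppose the two-factor case holds: every summand $\delta_r(-k)$ of $\delta_{r'}\delta_{c_n}$ with $p\nmid r$ satisfies $r+2k = (r'-1)+(c_n-1)+1$. Then the shifted summand $\delta_r(-k-j')$ satisfies
\begin{align*}
r+2(k+j') = r'+c_n-1 + 2j' = \sigma'+1 + c_n - 1 = \sigma+1,
\end{align*}
as required. Thus everything reduces to the case $n=2$ with $a=r'$ and $b=c_n$. It is convenient, though not essential, that here $p\nmid a$.

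\textbf{Step 3 (the case $n=2$; main obstacle).} The difficulty is that graded self-duality alone is not enough. The nondegenerate pairing on $\k[x,y]/(x^a,y^b)$, under which $T=x+y$ is self-adjoint, shows only that the multiset of summands is invariant under $j\mapsto \sigma+1-r-j$. This does not force each individual summand to be centered. I would first try to prove that in $\delta_a\delta_b$ each length $r$ with $p\nmid r$ occurs with multiplicity at most one. Granting this, the involution must fix each such summand, which is exactly the centering condition. To get the multiplicities I would use the explicit Han--Monsky (Han's thesis) formula for $\delta_a\delta_b$ in characteristic $p$, obtained by writing $a$ and $b$ $p$-adically.

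If multiplicity one fails, the fallback is to pin down the graded positions directly. The ranks of $T^k:[A]_i\to[A]_{i+k}$ on the monomial algebra $\k[x,y]/(x^a,y^b)$ are computable from binomial coefficients $\binom{k}{m}$ modulo $p$ via Lucas's theorem. These ranks determine the graded Jordan decomposition, and I expect them to show that blocks of length prime to $p$ can only begin in the degree $j=(a+b-1-r)/2$. This $n=2$ computation is the step I expect to be the real work; Steps 1 and 2 are formal.
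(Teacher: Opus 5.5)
The paper does not prove this statement. It quotes it from Kyomuhangi--Marangone--Raicu--Reed and uses it as a black box, so your proposal has to stand on its own.

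\textbf{Steps 1 and 2 are correct.} They give a valid reduction of the lemma to the case $n=2$.
\begin{itemize}
\item For Step 1 you do not need a digit-by-digit argument. Since $T^p$ is primitive, $\mathbb{K}[T^p]$ is a Hopf subalgebra of $\mathbb{K}[T]$. We have $\delta_{ps}(-j)\cong \mathbb{K}[T]\otimes_{\mathbb{K}[T^p]}\bigl(\mathbb{K}[T^p]/(T^{ps})\bigr)(-j)$.
\item The graded tensor identity $(\mathbb{K}[T]\otimes_{\mathbb{K}[T^p]}N)\otimes_{\mathbb{K}} M\cong \mathbb{K}[T]\otimes_{\mathbb{K}[T^p]}(N\otimes_{\mathbb{K}} M)$ then shows that the $p$-divisible classes form an ideal.
\item Graded Krull--Schmidt then shows that every summand of length prime to $p$ is a summand of some $(\delta_{r'}\delta_{c_n})(-j')$ with $p\nmid r'$ and $\delta_{r'}(-j')$ centered. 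Your shift computation is right.
\end{itemize}

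\textbf{The gap is Step 3.} All of the content of the lemma sits in the two-factor case, and you do not prove it.

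Multiplicity one for each prime-to-$p$ length in $\delta_a\delta_b$ would indeed finish the argument via the pairing on $\mathbb{K}[x,y]/(x^a,y^b)$. However:
\begin{itemize}
\item You only say you would try to extract it from Han's formula.
\item It is strictly stronger than what is needed.
\item It is a nontrivial statement about Jordan types in characteristic $p$. Proving it means actually working through the recursive base-$p$ product formulas, and tracking the grading if you want centering directly.
\end{itemize}

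The fallback is not a proof either. The ranks of $T^k\colon[A]_i\to[A]_{i+k}$ are ranks of Toeplitz matrices of binomial coefficients. Equivalently, they come from the graded Smith form over $\mathbb{K}[T]$ of $(T-N)^b$, where $N$ is multiplication by $x$ on $\mathbb{K}[x]/(x^a)$. Their behaviour mod $p$ is controlled by minors, not by individual binomial coefficients, so Lucas's theorem alone does not determine them.

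Here is a concrete instance of what any proof must explain. In characteristic $5$, the Smith form of $(T-N)^6$ gives
$\delta_6\delta_6=\delta_{11}\oplus\delta_9(-1)\oplus\delta_5(-2)\oplus\delta_5(-3)\oplus\delta_5(-4)\oplus\delta_1(-5)$.
The blocks of length prime to $p$ are centered (here $\sigma=10$), but $\delta_5(-2)$ and $\delta_5(-4)$ are not. So a proof of the two-factor case has to show that the characteristic-$p$ cancellations only ever displace blocks of length divisible by $p$.

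As written, your proposal correctly reduces the lemma to its two-variable instance and leaves that instance open.
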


The next lemma shows that $\delta_r(-j)$ satisfying \eqref{eq:failure_of_wlp} has length divisible by $p$.

\begin{lemma}
\label{lem:divisible_by_p}
If $\c \in \NL{n}{p}$ and $\delta_r(-j)$ is a summand of $\delta_{c_1}\cdots\delta_{c_n}$ so that equation \eqref{eq:failure_of_wlp} holds, then $p \mid r$.
\end{lemma}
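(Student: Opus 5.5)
We argue by contradiction, combining Lemma~\ref{lem:prime_to_p_centering} with the failure inequality \eqref{eq:failure_of_wlp}. Suppose $\delta_r(-j)$ is a summand of $\delta_{c_1}\cdots\delta_{c_n}$ satisfying \eqref{eq:failure_of_wlp}, and assume that $p \nmid r$. Lemma~\ref{lem:prime_to_p_centering} then gives the centering identity $r + 2j = \sigma + 1$, so $2j = \sigma + 1 - r$.

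Substituting this into the left-hand side of \eqref{eq:failure_of_wlp}, we compute
\begin{align*}
    2(r+j-1) = 2r + (\sigma + 1 - r) - 2 = \sigma + r - 1.
\end{align*}
Inequality \eqref{eq:failure_of_wlp} therefore reads $\sigma + r - 1 \leq \sigma - 1$, that is, $r \leq 0$. This contradicts $r \geq 1$, which holds because $\delta_r(-j)$ is the class of the nonzero module $\k[T]/(T^r)$. Hence $p \mid r$.

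There is essentially no obstacle, since the argument is a one-line substitution once Lemma~\ref{lem:prime_to_p_centering} is available. The only point to check is that Lemma~\ref{lem:prime_to_p_centering} applies: its hypotheses (positive integers $c_i$, $p \nmid r$, and $\delta_r(-j)$ a summand) hold here because $\c \in \Z_{\geq 2}^n$. The hypothesis $\c \in \NL{n}{p}$ only guarantees that such a summand exists; it plays no further role in the argument.
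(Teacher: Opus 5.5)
Your proof is correct and is essentially the paper's argument. Both substitute the centering identity $r+2j=\sigma+1$ from Lemma~\ref{lem:prime_to_p_centering} into \eqref{eq:failure_of_wlp} and reach a contradiction using $r \geq 1$. The paper packages this as the chain $\sigma+1 = r+2j \leq r+2j+(r-1) \leq \sigma$, while you solve the inequality directly to get $r \leq 0$.
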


\begin{proof}
Suppose by contradiction that $p\nmid r$. Then, we compute  
\begin{align*}
    \sigma+1 &= r+2j & & \text{(Lemma \ref{lem:prime_to_p_centering})} \\
    &\leq r+2j+(r-1) \\
    &\leq \sigma, & & (\text{equation } \eqref{eq:failure_of_wlp})
\end{align*}
which is a contradiction.
\end{proof}

Let $\c \in \NL{n}{p}$ and $\delta_r(-j)$ be a summand of $\delta_{c_1}\cdots\delta_{c_n}$ which satisfies equation \eqref{eq:failure_of_wlp}. Hence, there exists a homogeneous element $v \in \Ac$ of degree $j$ satisfying $T^{r-1}v \neq 0$ and $T^rv = 0$ so that $\Ac = \k[T]v \oplus B$ for some graded $\k[T]$-submodule $B$. Lemma \ref{lem:divisible_by_p} implies that $r = q\lambda$ where $q = p^e$ and $p\nmid \lambda$. Our goal is to descend $\delta_r(-j)$ to a summand $\delta_\lambda(-j_0)$ of some Artinian complete intersection, in which case we may apply Lemma \ref{lem:prime_to_p_centering}.

To this end, let $Z = T^q = x_1^q + \cdots + x_n^q$. For $\mathbb{\brho} := (\rho_1,\ldots,\rho_n)$ where $0 \leq \rho_i < \min(c_i,q)$, we define 
\begin{align*}
    M_{\brho} := \Span_\k \left\{ x_1^{\rho_1+qk_1}\cdots x_n^{\rho_n+qk_n} : k_i \in \Z, 0 \leq \rho_i +q k_i < c_i \right\}.
\end{align*}
We have that $A_\c \iso \bigoplus_{\brho} M_{\brho}$ as $\k[Z]$-modules where $\deg(Z) = q$. We also have that
\begin{align*}
    \k[T]v = \bigoplus_{0\leq a\leq q-1} \Span_{\k}\{T^{a+qs}v : 0 \leq s \leq \lambda-1\}
\end{align*}
as $\k[Z]$-modules. Let $W_0 := \Span_\k\{ v,Zv,\ldots,Z^{\lambda-1}v \}$. Because $W_0$ is an indecomposable graded $\k[Z]$-module appearing as a direct summand of $A_\c$, the graded Krull--Schmidt theorem implies that some $M_{\brho}$ contains a direct summand isomorphic to $W_0$.

\begin{lemma}
\label{lem:iso_M_rho_as_ci}
For $0 \leq \rho_i < \min(c_i,q)$, let $u_i := \lfloor \frac{c_i-1-\rho_i}{q}\rfloor$. Define 
\begin{align*}
    C_{\brho} := \k[y_1,\ldots,y_n]/(y_1^{u_1+1},\ldots,y_n^{u_n+1}).
\end{align*}
Let $Z$ act on $C_{\brho}$ by $Y := y_1+\cdots+y_n$. Then, $C_{\brho} \iso M_{\brho}$ as $\k[Z]$-modules.
\end{lemma}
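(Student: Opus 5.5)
We construct an explicit $\k$-linear bijection $\phi\colon C_{\brho}\to M_{\brho}$ on monomial bases and check that it intertwines the action of $Y$ with that of $Z$.

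\textbf{Bases.} A monomial $x_1^{\rho_1+qk_1}\cdots x_n^{\rho_n+qk_n}$ lies in $M_{\brho}$ exactly when $0\le \rho_i+qk_i\le c_i-1$ for every $i$. Since $0\le\rho_i<q$, the condition $\rho_i+qk_i\ge 0$ forces $k_i\ge 0$. The condition $\rho_i+qk_i\le c_i-1$ is equivalent to $k_i\le (c_i-1-\rho_i)/q$, that is, $k_i\le u_i$. Note that $u_i\ge 0$ because $\rho_i<c_i$. Hence the monomials $x^{\brho+q\mathbf{k}}$ with $0\le k_i\le u_i$ form a basis of $M_{\brho}$. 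The monomials $y^{\mathbf{k}}$ with the same ranges form a basis of $C_{\brho}$. We define $\phi(y^{\mathbf{k}}) := x^{\brho+q\mathbf{k}}$ and extend linearly. This is a bijection. It is also compatible with the gradings after a shift by $\sum_i\rho_i$, provided $\deg y_i$ is taken to be $q$ to match $\deg Z=q$.

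\textbf{Compatibility with the action.} We must show $Z\phi(y^{\mathbf{k}})=\phi(Yy^{\mathbf{k}})$ in $A_\c$. We have $Z\,x^{\brho+q\mathbf{k}}=\sum_i x^{\brho+q(\mathbf{k}+\e_i)}$. By the same inequality as above, the $i$-th term is nonzero in $A_\c$ exactly when $k_i+1\le u_i$. On the other side, $Yy^{\mathbf{k}}=\sum_i y^{\mathbf{k}+\e_i}$, and the $i$-th term is nonzero in $C_{\brho}$ exactly when $k_i+1\le u_i$. Thus both sides have the same nonzero terms, and these correspond under $\phi$. Since $Z$ maps $M_{\brho}$ into itself, $M_{\brho}$ is indeed a $\k[Z]$-submodule, and $\phi$ is an isomorphism of (graded, after the shift) $\k[Z]$-modules.

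\textbf{Remarks.} There is no real obstacle here. The only point needing care is the bookkeeping in the membership criterion $k_i\le u_i$, including the lower bound $k_i\ge 0$, which uses $\rho_i<q$. It is also worth recording, for the later application, that the grading shift is $\sum_i\rho_i$ with degrees scaled by $q$. This will be needed to transport the degree $j$ of $W_0$ to a degree $j_0$ in $C_{\brho}$ before Lemma \ref{lem:prime_to_p_centering} is applied.
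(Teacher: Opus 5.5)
Your proposal is correct and uses the same explicit monomial map $y^{\mathbf{k}}\mapsto x^{\brho+q\mathbf{k}}$ as the paper. You also supply the verification that the paper leaves implicit: the basis correspondence via $0\le k_i\le u_i$, which uses $\rho_i<q$ and $\rho_i<c_i$, and the term-by-term matching of the actions of $Z$ and $Y$.
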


\begin{proof}
The map $C_{\brho} \ra M_{\brho}$ sending $y_1^{a_1}\cdots y_n^{a_n}$ to $x_1^{\rho_1+a_1q}\cdots x_n^{\rho_n+a_nq}$ is an isomorphism of $\k[Z]$-modules.
\end{proof}

Since an isomorphic copy of $W_0$ is a direct summand of $M_{\brho}$ for some choice of vector $\brho$, Lemma \ref{lem:iso_M_rho_as_ci} implies that $\delta_\lambda(-j_0)$ is a summand of $\delta_{u_1+1}\cdots \delta_{u_n+1}$ where $j_0$ satisfies 
\begin{equation}
    \label{eqn:formula_for_j_zero}
    j = \sum_{i=1}^n \rho_i + q j_0.
\end{equation}
Moreover, Lemma \ref{lem:prime_to_p_centering} implies 
\begin{equation}
    \label{eqn:lam_plus_j_zero}
    \lambda + 2j_0 = \sum_{i=1}^n u_i +1.
\end{equation}
Our first attempt to approximate $\c$ by a translated center of $\mathscr{B}_q$ is via $q(\o+\u)$. It need not be the case that $\u\in \Gamma_n$, which we will rectify below, but first we record the distance between $\c$ and $q(\o+\u)$.

\begin{lemma}
\label{lem:E_ineq}
For $1 \leq i \leq n$, let $0 \leq \kappa_i \leq q-1$ be such that $c_i-1-\rho_i = q u_i + \kappa_i$. Put $\eta_i := q-1-\kappa_i$ and $E := \sum_{i=1}^n\left( \rho_i + \eta_i \right)$. Then,
\begin{enumerate}
    \item $0 \leq \eta_i \leq q-1$,
    \item $c_i = q(u_i+1) + \rho_i - \eta_i$, and
    \item $\norm{\c-q(\o+\u)}_1 = \sum_{i=1}^n \card{\rho_i - \eta_i} \leq E$.
\end{enumerate}
\end{lemma}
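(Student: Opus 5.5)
This is a direct computation from the definitions of $u_i$ and $\kappa_i$ via division with remainder; the plan is to verify the three items in order.

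\textbf{Existence of $\kappa_i$.} Since $0 \leq \rho_i < c_i$, we have $c_i - 1 - \rho_i \geq 0$. Division by $q$ writes this number uniquely as $q\lfloor \frac{c_i-1-\rho_i}{q}\rfloor + \kappa_i$ with $0 \leq \kappa_i \leq q-1$. By the definition of $u_i$ in Lemma~\ref{lem:iso_M_rho_as_ci}, the quotient here is exactly $u_i$, so the $\kappa_i$ in the statement exists and is unique.

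\textbf{Item (1).} Substituting $\eta_i = q-1-\kappa_i$, the bounds $0 \le \kappa_i \le q-1$ translate directly into $0 \le \eta_i \le q-1$.

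\textbf{Item (2).} Rearrange $c_i - 1 - \rho_i = qu_i + \kappa_i$ to get $c_i = qu_i + \kappa_i + 1 + \rho_i$. Then substitute $\kappa_i + 1 = q - \eta_i$ to obtain $c_i = q(u_i+1) + \rho_i - \eta_i$.

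\textbf{Item (3).} By item (2), the $i$-th coordinate of $\c - q(\o+\u)$ is $\rho_i - \eta_i$. Summing absolute values gives the stated equality. For the inequality, both $\rho_i$ and $\eta_i$ are non-negative, by the hypothesis on $\rho_i$ and by item (1). Hence $\card{\rho_i - \eta_i} \le \max(\rho_i,\eta_i) \le \rho_i + \eta_i$, and summing over $i$ gives the bound by $E$.

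No step is a real obstacle. The only point needing care is confirming that the quotient in the division is exactly $u_i$ as defined in Lemma~\ref{lem:iso_M_rho_as_ci}, which relies on $c_i - 1 - \rho_i \ge 0$. This holds because $\rho_i < \min(c_i, q) \le c_i$.
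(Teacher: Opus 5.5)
Your proposal is correct: it carries out exactly the division-with-remainder computation that the paper leaves implicit (the lemma is stated there without a written proof), verifying items (1)--(3) directly from $c_i-1-\rho_i = qu_i+\kappa_i$ and $\eta_i = q-1-\kappa_i$. Your bound $\card{\rho_i-\eta_i}\le \rho_i+\eta_i$ uses only the nonnegativity of $\rho_i$ and $\eta_i$, which is all that is needed.
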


The next lemma gives us a useful equivalent expression for $E := \sum_{i=1}^n\left( \rho_i + \eta_i \right)$.
\begin{lemma}
\label{lem:descr_E}
Let $\Delta := \sigma - 2(j+q\lambda-1)$. Then, $\Delta \geq 1$ and
\begin{equation}
    E = (n-2)(q-1) -\Delta -q(\lambda-1).
\end{equation}
\end{lemma}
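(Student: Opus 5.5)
The plan is to compute $\sigma$ in terms of $\u$, $\brho$ and $\boldsymbol{\eta}$ using Lemma~\ref{lem:E_ineq}(2), then use \eqref{eqn:formula_for_j_zero} and \eqref{eqn:lam_plus_j_zero} to eliminate $j$ and $j_0$. The claim $\Delta \geq 1$ is immediate. Since $r = q\lambda$ and the summand $\delta_r(-j)$ satisfies \eqref{eq:failure_of_wlp}, we have $2(q\lambda + j - 1) \leq \sigma - 1$. This says exactly that $\Delta = \sigma - 2(j+q\lambda-1) \geq 1$.

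For the identity, first sum Lemma~\ref{lem:E_ineq}(2) over $i$. This gives
\begin{align*}
\sigma = \sum_{i=1}^n (c_i - 1) = q\sum_{i=1}^n u_i + nq - n + \sum_{i=1}^n(\rho_i - \eta_i).
\end{align*}
Next, equation \eqref{eqn:lam_plus_j_zero} gives $\sum_i u_i = \lambda + 2j_0 - 1$. Multiplying \eqref{eqn:formula_for_j_zero} by $2$ gives $2qj_0 = 2j - 2\sum_i\rho_i$. Substituting both yields
\begin{align*}
\sigma = q\lambda + 2j - q + n(q-1) - \sum_{i=1}^n(\rho_i + \eta_i) = q\lambda + 2j - q + n(q-1) - E.
\end{align*}
Solving for $E$ gives
\begin{align*}
E = n(q-1) - q + q\lambda + 2j - \sigma.
\end{align*}
The definition of $\Delta$ gives $2j - \sigma = -\Delta - 2q\lambda + 2$, so
\begin{align*}
E = n(q-1) - q - q\lambda + 2 - \Delta.
\end{align*}
Finally, compare with the target. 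We have $(n-2)(q-1) - \Delta - q(\lambda - 1) = n(q-1) - 2q + 2 - \Delta - q\lambda + q$, and this equals the expression above.

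The only delicate point is bookkeeping. I must make sure \eqref{eqn:formula_for_j_zero} correctly accounts for the degree shift between $M_{\brho}$ and $C_{\brho}$. The isomorphism of Lemma~\ref{lem:iso_M_rho_as_ci} multiplies degrees by $q$ and adds $\sum_i \rho_i$, and I am taking \eqref{eqn:formula_for_j_zero} as given. I must also make sure the socle degree of $C_{\brho}$ is $\sum_i u_i$, so that Lemma~\ref{lem:prime_to_p_centering} applies in the form \eqref{eqn:lam_plus_j_zero}. Beyond that, the proof is the direct algebraic substitution above.
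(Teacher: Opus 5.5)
Your proof is correct and follows essentially the same route as the paper: $\Delta \geq 1$ comes directly from \eqref{eq:failure_of_wlp} with $r = q\lambda$. The identity comes from summing Lemma~\ref{lem:E_ineq}(2) and eliminating $j$ and $j_0$ via \eqref{eqn:formula_for_j_zero} and \eqref{eqn:lam_plus_j_zero}; the only difference is cosmetic (you solve for $E$, whereas the paper expands $\Delta$ directly), and your algebra checks out.
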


\begin{proof}
By equation \eqref{eq:failure_of_wlp}, $\Delta \geq 1$. Lemma \ref{lem:E_ineq} implies that
\begin{equation}
\label{eqn:alt_format_for_sigma}
\begin{aligned}
    \sigma &= \sum_{i=1}^n \left( c_i -1 \right) \\
    &= q \sum_{i=1}^n  u_i  + n(q-1) + \sum_{i=1}^n \rho_i - \sum_{i=1}^n \eta_i.
\end{aligned}
\end{equation}
Substituting equations \eqref{eqn:alt_format_for_sigma} and \eqref{eqn:formula_for_j_zero} into $\Delta =  \sigma - 2(j+q\lambda-1)$, we obtain
\begin{align*}
     \Delta &= \sigma - 2(j+q\lambda-1) \\
     &= q \sum_{i=1}^n u_i  + n(q-1) + \sum_{i=1}^n \rho_i  - \sum_{i=1}^n \eta_i   - 2\left(\sum_{i=1}^n \rho_i  + q j_0 +q\lambda-1 \right) \\
     &= n(q-1) - \sum_{i=1}^n (\rho_i + \eta_i) + q\left( \sum_{i=1}^nu_i -2j_0-2\lambda \right)+2 \\
     &= n(q-1) -E - q(\lambda+1) +2 \quad\quad (\ref{eqn:lam_plus_j_zero}) \\
     &= (n-2)(q-1) -E -q(\lambda-1).
\end{align*}
\end{proof}
The following lemmas will enable us to find $\b \in \Gamma_n$ which is close to $\u$.

\begin{lemma}
\label{lem:even}
With notation as in Section 2.2, we have that $\sum_{i=1}^n u_i + \lambda-1$ is even. For $1 \leq k \leq n$, 
\begin{equation}
    2u_k \leq \sum_{i=1}^n u_i + \lambda-1.
\end{equation}
\end{lemma}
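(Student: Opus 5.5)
The plan is to read both assertions off the summand $\delta_\lambda(-j_0)$ of $\delta_{u_1+1}\cdots\delta_{u_n+1}$ that was produced just before Lemma~\ref{lem:E_ineq}, and to use the centering relation \eqref{eqn:lam_plus_j_zero}.

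\emph{Parity.} Rewrite \eqref{eqn:lam_plus_j_zero} as $\sum_{i=1}^n u_i = \lambda + 2j_0 - 1$. Then
\begin{align*}
\sum_{i=1}^n u_i + \lambda - 1 = 2(\lambda + j_0 - 1),
\end{align*}
which is even. This identity also turns the inequality into the equivalent statement $u_k \le \lambda + j_0 - 1$. Substituting the same relation once more, this becomes $j_0 \le \sum_{i \ne k} u_i$.

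\emph{Inequality.} Fix $k$ and put $s := \sum_{i\neq k} u_i$. I will show that $C_{\brho}$ is generated as a $\k[Y]$-module by monomials $\prod_{i\neq k} y_i^{a_i}$ with $a_i\le u_i$. All of these have degree at most $s$.

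\begin{enumerate}
\item Take a monomial $m\,y_k^b$ with $b\ge 1$, where $m$ involves only the $y_i$ with $i \neq k$. Since $y_k = Y - \sum_{i\neq k} y_i$,
\begin{align*}
m\,y_k^b = Y\,(m\,y_k^{b-1}) - \sum_{i\neq k} y_i m\, y_k^{b-1}.
\end{align*}
\item Every term on the right lies in the $\k[Y]$-span of monomials whose $y_k$-exponent is $b-1$. Monomials that vanish in $C_{\brho}$ cause no problem.
\item Induction on $b$ finishes the argument.
\end{enumerate}

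Consequently $C_{\brho}/Y C_{\brho}$ is concentrated in degrees $\le s$. On the other hand, $\delta_\lambda(-j_0)$ is a graded direct summand of $C_{\brho}$ (via Lemma~\ref{lem:iso_M_rho_as_ci}). Its generator survives modulo $Y$ and contributes a nonzero class to $C_{\brho}/YC_{\brho}$ in degree $j_0$. Hence $j_0\le s$, which is exactly the required inequality.

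The one point needing care is the degree bookkeeping. The summand lives in $C_{\brho}$ graded with $\deg y_i = 1$, and $j_0$ is measured in that grading, as encoded by \eqref{eqn:formula_for_j_zero}. I must check that the direct-summand statement transports to this grading so that "generator in degree $j_0$ survives in $C_{\brho}/YC_{\brho}$" is legitimate. This is where I expect the only real subtlety. The remaining steps are direct substitutions.
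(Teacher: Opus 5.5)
Your proof is correct and follows the paper's argument: parity comes from \eqref{eqn:lam_plus_j_zero}, and $j_0 \le \sum_{i\neq k} u_i$ holds because the generator of the summand $\delta_\lambda(-j_0)$ survives in $C_{\brho}/YC_{\brho}$, which the paper observes is a quotient of the complete intersection $B_k$ in the variables other than $y_k$. Your induction on the $y_k$-exponent is just an explicit proof of that quotient statement, and the grading point you flag is already part of the setup: there $\delta_\lambda(-j_0)$ is a graded summand of $\delta_{u_1+1}\cdots\delta_{u_n+1}$, with $j_0$ measured in the standard grading of $C_{\brho}$ via \eqref{eqn:formula_for_j_zero}.
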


\begin{proof}
Let $w \in C_{\brho}$ be a homogeneous generator corresponding to the summand $\delta_\lambda(-j_0)$. From the Jordan decomposition of $C_{\brho}$ as a $\k[Y]$-module, we have that the image of $w$ in $C_{\brho}/YC_{\brho}$ is non-zero. $C_{\brho}/YC_{\brho}$ is a quotient of the algebra
\begin{align*}
    B_k := \frac{\k[y_1,\ldots,\widehat{y_k},\ldots,y_n]}{(y_1^{u_1+1},\ldots,\widehat{y_k^{u_k+1}},\ldots,y_n^{u_n+1})}.
\end{align*}
Hence the degree of $w$ is at most the socle degree of $B_k$, i.e.,
\begin{equation}
    \label{eq:bnd}
    j_0 \leq \sum_{i=1}^nu_i - u_k.
\end{equation}
Combining equations \eqref{eq:bnd} and \eqref{eqn:lam_plus_j_zero} implies
\begin{align*}
    2u_k \leq 2 \left( \sum_{i=1}^nu_i - j_0 \right) = \sum_{i=1}^nu_i + \lambda -1,
\end{align*}
and the right-hand side is  even.
\end{proof}

\begin{lemma}
\label{lem:choose_b}
Let $\u \in \Z_{\geq 0}^n$ and $\lambda \in \Z_{\geq1}$. Suppose $\sum_{i=1}^n  u_i + \lambda -1$ is even and $2u_j \leq \sum_{i=1}^n  u_i+\lambda-1$ for all $1 \leq j \leq n$. Then, there exists $\b \in \Gamma_n$ so that $0 \leq b_j \leq u_j$ for all $j$ and $\sum_{i=1}^n(u_i-b_i) \leq \lambda -1$.
\end{lemma}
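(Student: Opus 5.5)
We argue by induction on $\lambda$, reducing $\lambda$ by one at each step while decreasing a single coordinate of $\u$ by one. Write $N := \sum_{i=1}^n u_i + \lambda - 1$, which is even by hypothesis, with $2u_j \le N$ for all $j$.

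\textbf{Base case $\lambda = 1$.} Here $\sum_i u_i$ is even and $2\max_j u_j \le \sum_i u_i$, so $\u \in \Gamma_n$ by the definition \eqref{eqn:gamma_n}. We take $\b = \u$, and then $\sum_i (u_i - b_i) = 0 = \lambda - 1$.

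\textbf{Inductive step $\lambda \ge 2$.}
\begin{enumerate}
\item Choose an index $k$ with $u_k = \max_i u_i$. If $\u = 0$, then $\b = 0 \in \Gamma_n$ already works, since $\sum_i u_i = 0 \le \lambda - 1$; so assume $u_k \ge 1$.
\item Set $\u' := \u - \e_k$ and $\lambda' := \lambda - 1$. Then $\sum_i u_i' + \lambda' - 1 = N - 2$ is even, and $\u' \in \Z_{\ge 0}^n$.
\item Verify the inequality $2u_j' \le N - 2$ for all $j$. For $j = k$ this holds because $2(u_k - 1) = 2u_k - 2 \le N - 2$.
\item For $j \ne k$ we have $u_j' = u_j$, so we need $2u_j \le N - 2$. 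By evenness of $N$ this is equivalent to $2u_j < N$, i.e.\ to excluding $2u_j = N$.
\end{enumerate}

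\textbf{The obstacle: the tight case.} The main work is handling $2u_j = N$ for some $j \ne k$. Then $u_j = u_k$ is also a maximum and $u_j + u_k = N$. This forces $\sum_{i \ne j,k} u_i + \lambda - 1 = 0$, contradicting $\lambda \ge 2$. So in the case $\lambda \ge 2$ the tight case never occurs, and the induction hypothesis applies to $(\u', \lambda')$.

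\textbf{Conclusion.} The induction hypothesis gives $\b \in \Gamma_n$ with $0 \le b_i \le u_i'$ for all $i$ and $\sum_i (u_i' - b_i) \le \lambda - 2$. Since $u_i' \le u_i$, we get $0 \le b_i \le u_i$, and
\begin{align*}
\sum_{i=1}^n (u_i - b_i) = 1 + \sum_{i=1}^n (u_i' - b_i) \le \lambda - 1.
\end{align*}
This completes the induction. Lemma~\ref{rem:translated_equiv} is not needed; membership in $\Gamma_n$ is checked directly in the base case.
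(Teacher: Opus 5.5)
Your proof is correct, but it takes a different route from the paper's.

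The paper builds $\b$ in one shot via a three-way case split on $M=\max_i u_i$:
\begin{itemize}
\item if $2M\le\sum_i u_i$, it takes $\b=\u$ or lowers one maximal coordinate by $1$, depending on the parity of $\sum_i u_i$ (in the odd case the hypothesis forces $\lambda-1$ to be odd, hence $\ge 1$);
\item if $2M>\sum_i u_i$, it lowers the maximal coordinate $u_k$ all the way to $\sum_{i\ne k}u_i$, at cost $2M-\sum_i u_i\le\lambda-1$.
\end{itemize}

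You instead notice that the hypotheses form an invariant: $N=\sum_i u_i+\lambda-1$ is even and $2u_j\le N$ for all $j$. This invariant survives lowering a maximal coordinate and $\lambda$ by one each. The only possible failure is a second coordinate with $2u_j=N$. You correctly rule this out, because it would force $\sum_{i\ne j,k}u_i+\lambda-1=0$ with $\lambda\ge 2$. Your induction therefore folds the parity case and the dominant-coordinate case into a single uniform check, which is arguably cleaner.

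What the paper's explicit construction buys is sharpness. Its $\b$ is a closest element of $\Gamma_n$ lying coordinatewise below $\u$: the distance is $0$, $1$, or $2M-\sum_i u_i$, and each of these is forced. The last one is forced because $b_k\le\sum_{i\ne k}b_i\le\sum_{i\ne k}u_i$. Your greedy $\b$ always lies at distance exactly $\min(\lambda-1,\sum_i u_i)$ from $\u$. For example, for $\u=(1,1,0)$ and $\lambda=3$ you end at $\b=\mathbf{0}$ rather than $\b=\u$. Since the reverse inclusion only uses $\norm{\u-\b}_1\le\lambda-1$, this difference is immaterial there.
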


\begin{proof}
Let $M = \max_i u_i$. Let $1 \leq k \leq n$ be an integer so that $M = u_k$.

\ul{Case 1.} Suppose that $2 M \leq \sum_{i=1}^n u_i$ and $\sum_{i=1}^n u_i$ is even. Then, $\b := \u \in \Gamma_n$.

\ul{Case 2.} Suppose that $2 M \leq \sum_{i=1}^n u_i$ and $\sum_{i=1}^n u_i$ is odd. Put $b_i := u_i$ for $i \neq k$, and put $b_k := u_k-1$. Since $\sum_{i=1}^n u_i$ is odd, $M \neq 0$, and hence $\b \in \Z_{\geq0}^n$. Next, we have that $2\max_i b_i \leq 2 u_k$. Since $2u_k \leq \sum_{i=1}^n u_i$ (assumption) and $2u_k$ is even and $\sum_{i=1}^n u_i$ is odd (assumption), the difference in parity implies that
\begin{align*}
    2u_k \leq \sum_{i=1}^n u_i-1 = \norm{\b}_1.
\end{align*}
Thus, $2\max_i b_i \leq \norm{\b}_1$. Since in addition $\norm{\b}_1$ is even, $\b \in \Gamma_n$ by \eqref{eqn:gamma_n}.

Since $\sum_{i=1}^n u_i + \lambda -1$ is even (assumption) and $\sum_{i=1}^n u_i$ is odd (assumption), it follows that $\lambda-1$ is odd. Hence $\lambda-1 \geq 1$, and it follows that 
\begin{align*}
    \norm{\u-\b}_1 =1 \leq \lambda-1.
\end{align*}

\ul{Case 3.} Suppose $\sum_{i=1}^nu_i < 2M$. Let $b_i := u_i$ for $i \neq k$ and $b_k := \sum_{i=1}^n u_i-M$. By construction, $\b \in \Z_{\geq 0}^n$ and $\sum_{i=1}^n b_i = 2(\sum_{i=1}^n u_i-M)$. For $j \neq k$, we have
\begin{align*}
    b_j = u_j \leq \sum_{i\neq k} u_i = \sum_{i=1}^n u_i-M = b_k.
\end{align*}
Thus, for $1 \leq j \leq n$, 
\begin{align*}
    2 b_j \leq 2 b_k = 2\left(\sum_{i=1}^n u_i-M\right),
\end{align*}
and it follows that $\b \in \Gamma_n$.

The hypothesis $\sum_{i=1}^n  u_i < 2M$ implies that $b_k \leq u_k$. The hypothesis $2u_k \leq \sum_{i=1}^n u_i + \lambda-1$ implies $2M - \sum_{i=1}^n u_i \leq \lambda-1$. We compute that
\begin{align*}
    \sum_{i=1}^n (u_i - b_i) = 2M - \sum_{i=1}^n  u_i \leq \lambda-1,
\end{align*}
which completes the proof.
\end{proof}

We are now ready to show that $\c \in \mscr{B}_q$.

\begin{proof}[Proof of the reverse inclusion in Theorem \ref{thm:intro}]
Continue with the notation of Section 2.2. By Lemmas \ref{lem:even} and \ref{lem:choose_b}, we can find $\b \in \Gamma_n$ so that $0 \leq b_i \leq u_i$ and $\norm{\u-\b}_1 \leq \lambda-1$. Put $\eps := \c-q(\o+\b).$ We compute
\begin{align*}
    \norm{\eps}_1 &= \norm{\c - q(\o+\b)}_1 \\
    &\leq \norm{\c-q(\o+\u)}_1 + q\norm{\u-\b}_1 & & \\
    &\leq E + q(\lambda-1) & & (\text{Lemmas } \ref{lem:E_ineq}, \; \ref{lem:choose_b}) \\
    &= (n-2)(q-1) - q(\lambda-1) - \Delta + q(\lambda-1) & & (\text{Lemma } \ref{lem:descr_E}) \\
    &\leq (n-2)(q-1) -1 & & (\text{Lemma } \ref{lem:descr_E}) \\
    &= r_q.
\end{align*}
We compute
\begin{align*}
    \epsilon_i 
    &= c_i-q-qb_i & & \\
    &= (qu_i+q+\rho_i-\eta_i) -q-qb_i & & (\text{Lemma \ref{lem:E_ineq}}) \\
    &\geq q(u_i - b_i) + \rho_i +1 - q & & (\text{Lemma } \ref{lem:E_ineq}(1)) \\
    &\geq 1-q.
\end{align*}
By Lemma \ref{lem:equiv_char}, it follows that $\c \in \mscr{B}_q$, which proves the reverse containment.
\end{proof}

\section*{Acknowledgments}
The author was partially supported by NSF grant DMS-2342256. The author found computations with Macaulay2 \cite{M2} helpful. During conversations with ChatGPT, the author became aware of the reference \cite{Dhand2014}, which led to a simplification of the author's earlier arguments for the forward inclusion in Theorem \ref{thm:intro}. The author also asked ChatGPT to use \cite[Proposition 5.4]{kyomuhangi2024cohomology} to explore a proof of the reverse inclusion in Theorem \ref{thm:intro}. ChatGPT provided a proof outline, which the author subsequently developed into a rigorous proof. ChatGPT was then used to refine and check details of the resulting argument. The author takes full responsibility for the correctness of all mathematical content.

\bibliographystyle{alpha}
\bibliography{bibliography}

\end{document}